\documentclass[11pt]{article}
\usepackage{geometry}
\usepackage[utf8]{inputenc}
\usepackage{amsmath,amsthm,amssymb,color,graphicx,url,hyperref,diagbox,enumerate,cite,tikz,authblk}
\usepackage{appendix}

\newtheorem{theorem}{Theorem}

\newtheorem{problem}[theorem]{Problem}

\newtheorem{lemma}[theorem]{Lemma}

\newcounter{encoding}

\newcommand{\N}{\mathbb{Z}^+}

\newcommand{\Z}{\mathbb{Z}}

\newcommand{\lengthconstant}{19}

\newcommand{\floor}[1]{\left\lfloor #1 \right\rfloor}

\title{The Fibonacci numbers are not 3-accessible}
\author{William J. Wesley\thanks{Discrete Mathematics Group, Institute for Basic Science (IBS), Daejeon,
South Korea. \\This work was supported by the Institute for Basic Science (IBS-R029-C1).}}
\date{\today}
\affil{}

\begin{document}

\maketitle

\section{Introduction}

A \emph{$D$-diffsequence} is a sequence of integers $x_1 < \dots < x_k$ such that $x_{i+1} -x_i \in D$ for $1 \le i \le k-1$. The set $D$ is called \emph{$r$-accessible} if every $r$-coloring of the positive integers contains arbitrarily long monochromatic $D$-diffsequences. The maximum $r$ for which $D$ is $r$-accessible is called the \emph{degree of accessibility} of $D$ and denoted $\operatorname{doa}(D)$. 

Diffsequences have been studied for many sets $D$ in several recent papers \cite{CliftonDiffsequences,RamseyFunctions_RestrictedGaps,Fibonacci_Ramsey,WJWFib4,LandmanRobertson,TalwarGupta,LandmanRobertsonRobertson}. We refer the reader to \cite{LandmanRobertson}, Section 10.3 for an overview of the topic. The set of primes was recently determined to be 2-accessible \cite{Quester2026primes2accessible}. The degree of accessibility of the set $F = \{1,2,3,5,8,\dots\}$ of Fibonacci numbers was first studied in 2007 by Landman and Robertson in
\cite{LandmanRobertson_SpecialGaps}, where the authors showed $\operatorname{doa}(F) \ge 2$. They left determining the exact value of $\operatorname{doa}(F)$ as an open problem.
The first upper bound was given in
\cite{Fibonacci_Ramsey}, where the authors gave a 6-coloring of the positive integers that does not contain any 2-term monochromatic $F$-diffsequences, which gives $\operatorname{doa}(F) \le 5$. Computational data from \cite{Fibonacci_Ramsey} strongly suggested that $\operatorname{doa}(F) \le 3$, and the author proved this in \cite{WJWFib4}. Quester later gave a different proof in \cite{QuesterDiffsequences}.  However, for 3 colors, the picture was unclear, and the question of whether the Fibonacci numbers are 3-accessible also appeared as an open problem in several works \cite{LandmanRobertson,QuesterDiffsequences,OpenProblemGarden}.

A natural class of $r$-colorings is those of the form $\chi_{\alpha}$, where $\chi_{\alpha}$ assigns $n$ the color $i$ if $\{\alpha n\} \in [\frac{i-1}{r}, \frac{i}{r})$, where $\{x\} = x - \lfloor x \rfloor$ denotes the fractional part of $x$. Such a coloring is called an equal-interval \emph{coding} of the rotation $\alpha$. These colorings (and related ones) have been studied from the perspective of ergodic theory and combinatorial words, and they are especially useful for diffsequences. One can show that if there exists a real number $\alpha$ such that \begin{equation} \label{EqAlpha} \frac{1}{k-1}\le \{\alpha d\} \le 1-\frac{1}{r}
\end{equation}for all $d \in D$, then $\chi_\alpha$ contains no monochromatic $k$-term $D$-diffsequences, and in particular, $D$ is not $r$-accessible. Clifton used this idea to prove that certain \emph{dividing sequences} are not 2-accessible \cite{CliftonDiffsequences}. Quester went on to show that if $D = \{d_i\}$ satisfies $d_{i+1}/d_i  > 2+ \frac{1}{r-1} +\delta$ for some $\delta >0$ and all $i$, then $D$ is not $r$-accessible \cite{QuesterDiffsequences}. 

Unfortunately, there is no $\alpha$ that satisfies \eqref{EqAlpha} for $d \in F$ for $r = 3$ colors (see the concluding remarks of \cite{QuesterDiffsequences}). However, the recurrence properties of the Fibonacci numbers allow for some control over $\{\alpha d\}$ for $d \in F$. In particular, we can find an $\alpha$ such that the sequence of values $\{ \alpha f_n\}$ is ``almost" periodic (more precisely, this sequence converges to a periodic sequence of rational numbers). Considering a larger class of colorings than the equal interval coding will allow us to find a 3-coloring of $\N$ that avoids long monochromatic $F$-diffsequences. This note proves the titular result, settling the question of the degree of accessibility of the Fibonacci numbers. 

\begin{theorem}\label{MainThm}
    There exists a 3-coloring of $\N$ that does not contain monochromatic $\lengthconstant$-term $F$-diffsequences. In particular, $\operatorname{doa}(F) = 2$. 
\end{theorem}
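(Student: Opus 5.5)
We color $\N$ through a rotation $\alpha$ and a partition of the circle $\mathbb{R}/\mathbb{Z}$ into finitely many arcs carrying three colors. This generalizes the equal-interval coding $\chi_\alpha$ of the introduction. Since \eqref{EqAlpha} cannot hold for all $d\in F$ when $r=3$, we instead choose $\alpha$ so that $\{\alpha f_n\}$ is eventually almost periodic.

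\textbf{Choice of $\alpha$.} Take $\alpha = p/q + \varepsilon\varphi^{-m}$-type perturbations of a rational. More cleanly, take $\alpha = a/q + \beta$ with $\beta$ of the form $c(\varphi-1)$-adjusted so that $\beta f_n \to 0$ modulo $1$. This uses the identity $f_n\varphi^{-1} - f_{n-1} = \pm\varphi^{-n}$ (up to indexing), so $\{c\varphi^{-1} f_n\}$ approaches $\{c f_{n-1}\}$ exponentially fast when $c$ is an integer. Concretely, choose $\alpha = \frac{1}{q}\,\theta$ where $\theta = 1/\varphi$ and $q$ is a small integer, say $q\in\{4,5,6,8\}$. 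Then $\alpha f_n = \frac{1}{q}(f_{n-1} \pm \varphi^{-n})$. Hence $\{\alpha f_n\}$ converges to the periodic sequence $\{f_{n-1}/q\}$, whose period is the Pisano period mod $q$, with an explicit error $\varphi^{-n}/q$.

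\textbf{Step 1.} Compute the finite limit set $L=\{\{f_{n-1}/q\}\}$ of rationals, including the signs of the approach to each limit. Record separately the finitely many exceptional small gaps $d=1,2,3,5,\dots$ up to some index $N$ where the error is not yet small.

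\textbf{Step 2.} Design a coloring $c(x)$ of the circle by arcs such that no monochromatic walk $x, x+\{\alpha d_1\}, x+\{\alpha d_1\}+\{\alpha d_2\},\dots$ of length $\lengthconstant$ exists. Here each step $\{\alpha d\}$ lies in $L$ perturbed by the known error, or in the finite exceptional set. The mechanism is a potential argument: within each color class (a union of arcs), every admissible step moves a point forward and lands it in the same arc only by progressing monotonically toward the arc's end. Each arc then supports a bounded number of steps, and the walk can revisit only boundedly many arcs. Color $n$ by $c(\{\alpha n\})$.

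\textbf{Step 3.} Verify the claim by a finite computation over arc endpoints. This is legitimate because the step set is a finite set of intervals, namely small neighborhoods of the elements of $L$ together with exceptional points. One can then bound the longest monochromatic path in the directed graph whose vertices are (arc, subarc) pieces refined at all translates of endpoints. Finally, $\operatorname{doa}(F)\ge 2$ from Landman--Robertson gives equality.

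\textbf{Main obstacle.} The main obstacle is Step 2: finding $q$ and an arc partition that works simultaneously for all limit steps and the early exceptional Fibonacci numbers. The equal-interval choice provably fails, so the arcs must be unequal and tuned to $L$. I would first search by computer over small $q$ and arc partitions with rational endpoints of small denominator, then certify the result through the finite graph computation of Step 3. The error terms $\varphi^{-n}/q$ must be handled by keeping the arc endpoints strictly away from the translates of $L$.
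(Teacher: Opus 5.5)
\textbf{Gap: your choice of $\alpha$ makes $0$ a limit point of the steps, which defeats every arc coloring.}

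With $\alpha=\theta/q$ you correctly get $\alpha f_n=\frac1q\bigl(f_{n-1}+(-1)^{n+1}\varphi^{-n}\bigr)$. But the limiting sequence $f_{n-1}\bmod q$ is the ordinary Fibonacci sequence modulo $q$. That sequence is purely periodic and starts at $f_0=0$, so $0\in L$ for \emph{every} $q$; for $q=4$, for instance, $q\mid f_{n-1}$ for $n=7,13,19,\dots$. Hence infinitely many gaps $f_n$ satisfy $\|\alpha f_n\|\le\varphi^{-n}/q$, where $\|\cdot\|$ denotes distance to the nearest integer.

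This breaks any coloring $n\mapsto c(\{\alpha n\})$ by arcs. By density, take $x_1$ with $\{\alpha x_1\}$ at distance $\delta>0$ from the endpoints of its arc. Choose $n_1<\dots<n_{k-1}$ with $q\mid f_{n_i-1}$ and $\sum_i\varphi^{-n_i}/q<\delta$, and put $x_{i+1}=x_i+f_{n_i}$. This gives a monochromatic $k$-term $F$-diffsequence for every $k$.

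Your general variant $\alpha=a/q+\beta$ with $\beta f_n\to 0 \pmod 1$ fails the same way. There $\{\alpha f_n\}$ approaches $\{af_n/q\}$, which is $0$ whenever $q\mid f_n$. So the potential argument of Step 2 cannot exist, because an arc absorbs arbitrarily many arbitrarily short steps. No search in Step 3 can produce a certificate.

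\textbf{The missing idea: make the limiting periodic sequence avoid $0$, indeed stay well away from it.} The paper uses rotation number $(a+b\phi)/m$. By Lemma \ref{LemmaRecur} the limit is then $r_n=af_n+bf_{n+1}\bmod m$, and $(a,b)$ is chosen so that $r_n$ is a geometric progression with unit ratio. With $m=29$, $a=23$, $b=1$ one gets $r_n\equiv 24^n$. Here $24$ is a root of $x^2-x-1$ modulo $29$ of order $7$, so $r_n$ never vanishes.

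The paper then dilates by $8$: $\alpha=184+8\phi$, $m=232$, $\rho(n)=\lfloor\alpha n\rfloor \bmod 232$, which is rotation by $(23+\phi)/29$ with arcs of length $1/232$. All steps, including the early exceptional ones, lie in a $25$-element set $S\subseteq\mathbb{Z}_{232}\setminus\{0\}$. The proof is finished by exhibiting an explicit $3$-coloring of $\mathbb{Z}_{232}$ whose classes induce acyclic subgraphs of the Cayley digraph of $S$, with longest paths of at most $19$ vertices.

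Your proposal also never produces a concrete coloring. For this existence statement the explicit certificate is the substance of the proof, not a routine final step.
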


\section{Proof of Theorem \ref{MainThm}}
\label{SectionMain}

Throughout, we let $m = 232$, $\phi = \frac{1+ \sqrt{5}}{2}, \psi = \frac{1-\sqrt{5}}{2}$, $\alpha = 184 + 8\phi$, and take note of the fact that $|\psi| < 1$. We let $f_0 = 0, f_1 = 1$, and $f_n = f_{n-1} + f_{n-2}$ for $n \ge 2$, so that $F = \{f_n: n \ge 1\}$. 

The first result we need is a simple lemma on the recurrence properties of the Fibonacci numbers. 

\begin{lemma}\label{LemmaRecur}
For integers $a,b$, we have $$(a+b\phi)f_n = af_n + bf_{n+1} - b \psi^n$$ for all $n\ge 0$.
\end{lemma}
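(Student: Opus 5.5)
The identity reduces to $\phi f_n = f_{n+1} - \psi^n$ for all $n \ge 0$. Indeed, given this, multiplying by $b$ and adding $a f_n$ to both sides yields
$$(a+b\phi)f_n = af_n + b f_{n+1} - b\psi^n.$$
So the plan is to prove $\phi f_n = f_{n+1} - \psi^n$. There are two natural routes, and I would present the inductive one since it avoids Binet's formula.

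The inductive route runs on $n$.
\begin{itemize}
\item For $n=0$: the left side is $\phi \cdot 0 = 0$, and the right side is $f_1 - \psi^0 = 1 - 1 = 0$.
\item For $n=1$: the left side is $\phi$, and the right side is $f_2 - \psi = 1 - \psi = \phi$, using $\phi + \psi = 1$.
\item For the inductive step, assume the identity holds for $n-1$ and $n-2$ with $n \ge 2$. Adding the two instances gives
$$\phi f_n = \phi(f_{n-1} + f_{n-2}) = f_n + f_{n-1} - (\psi^{n-1} + \psi^{n-2}) = f_{n+1} - \psi^{n-2}(\psi + 1).$$
Since $\psi$ is a root of $x^2 = x + 1$, we have $\psi + 1 = \psi^2$, so the last term is $\psi^n$, as required.
\end{itemize}

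The alternative route uses Binet's formula $f_n = (\phi^n - \psi^n)/\sqrt5$ together with $\phi\psi = -1$. Then
$$\phi f_n - f_{n+1} = \frac{\phi^{n+1} - \phi\psi^n - \phi^{n+1} + \psi^{n+1}}{\sqrt5} = \frac{\psi^n(\psi - \phi)}{\sqrt5} = -\psi^n,$$
because $\psi - \phi = -\sqrt5$.

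There is no real obstacle. The only points needing care are using both base cases, since the recurrence is second order, and invoking $\psi^2 = \psi + 1$ in the inductive step.
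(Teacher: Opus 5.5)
Your proof is correct and follows essentially the paper's argument: both check $n=0,1$ and then use that both sides satisfy the Fibonacci recurrence via $\psi^2=\psi+1$. Your only variation is first reducing, by linearity in $a,b$, to $\phi f_n = f_{n+1}-\psi^n$, which is harmless; the Binet-formula computation you offer as an alternative is also valid.
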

\begin{proof}
    Let $L_n = (a+b\phi)f_n$ and $R_n = af_n + bf_{n+1} - b \psi^n.$
    A straightforward computation shows $L_n = R_n$ for $n = 0,1$. It is also not hard to see that $L_n$ satisfies the Fibonacci recurrence $L_{n} = L_{n-1} + L_{n-2}$ for $n \ge 2$, and using the fact that $\psi^n = \psi^{n-1} + \psi^{n-2}$ for all $n$, we see that $R_n = R_{n-1} + R_{n-2}$ for all $n\ge 2$ as well. Therefore both sides of the equation agree for all $n \ge 0$. 
\end{proof}

For any positive integer $n$, let $\rho(n) = \floor{\alpha n} \pmod{m} \in \Z_m$. Define a 3-coloring $C$ on $\Z_m$ whose  color classes $C_i = \{ x \in \Z_m : C(x) = i\}$, $1\le i \le 3$, are 

\begin{align*}
C_1&=[0,24]\cup\{68\}\cup[87,98]\cup[125,126]
     \cup[148,155]\cup[187,196]\cup[200,217]\cup\{231\}, \\
     C_2 &=[41,55]\cup[117,124]\cup[127,143]\cup[156,186]\cup[218,224],\\
     C_3 &= [25,40]\cup[56,67]\cup[69,86]\cup[99,116]\cup[144,147]\cup[197,199]\cup[225,230].
\end{align*}

We will show that the 3-coloring $\chi: \N \to \{1,2,3\}$ given by 
$$\chi(n) = C(\rho(n))$$ does not contain any long monochromatic $F$-diffsequences. 

The key to the proof is that there are not too many possibilities for the change in $\rho$ when translating the argument by a Fibonacci number. Let \begin{multline*}S = \{ 7, 8, 9, 55, 56, 57, 126, 127, 128, 129, 159, 160, 161, 162,\\ 183, 184, 185, 191, 192, 193, 196, 197, 199, 200, 201 \}.\end{multline*}

\begin{lemma} \label{LemmaS}
    For all $f \in F$, we have $\rho(x+f) -\rho(x) \in S$. 
\end{lemma}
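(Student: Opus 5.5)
The plan is to reduce the statement to a finite computation. I would first control $\floor{\alpha(x+f)}-\floor{\alpha x}$ in terms of $\floor{\alpha f}$, and then use Lemma \ref{LemmaRecur} together with the periodicity of Fibonacci numbers modulo $29$ to list the possible values of $\floor{\alpha f} \bmod m$.

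\emph{Step 1.} For any reals $u,v$ we have $\floor{u+v}-\floor{u}\in\{\floor{v},\floor{v}+1\}$. Applying this with $u=\alpha x$ and $v=\alpha f$ gives
$$\rho(x+f)-\rho(x)\in\{\floor{\alpha f},\floor{\alpha f}+1\} \pmod m.$$
So it suffices to show that for every $f\in F$ both $\floor{\alpha f}\bmod m$ and $\floor{\alpha f}+1\bmod m$ lie in $S$.

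\emph{Step 2.} Write $f=f_n$ with $n\ge 1$. By Lemma \ref{LemmaRecur} with $a=184$, $b=8$,
$$\alpha f_n = 184f_n+8f_{n+1}-8\psi^n,$$
and therefore
$$\floor{\alpha f_n}=184f_n+8f_{n+1}+\floor{-8\psi^n}.$$

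\emph{Step 3: small $n$.} For $n\le 4$ we have $8|\psi|^n>1$, so I would compute these cases directly.
\begin{itemize}
\item For $f=1$ (that is, $n=1,2$): $\floor{\alpha}=196$, giving the shifts $196$ and $197$.
\item For $f=2$: $\floor{2\alpha}=393\equiv 161$, giving $161$ and $162$.
\item For $f=3$: compute $\floor{3\alpha}=590\equiv 126$, giving $126$ and $127$.
\end{itemize}
All of these lie in $S$.

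\emph{Step 4: $n\ge 5$.} Here $8|\psi|^n\le 8|\psi|^5<1$. Since $\psi<0$, the sign of $\psi^n$ alternates:
\begin{itemize}
\item for even $n$, $-8\psi^n\in(-1,0)$, so $\floor{-8\psi^n}=-1$;
\item for odd $n$, $-8\psi^n\in(0,1)$, so $\floor{-8\psi^n}=0$.
\end{itemize}
Since $232=8\cdot 29$, we get
$$184f_n+8f_{n+1}=8(23f_n+f_{n+1})\equiv 8\bigl((23f_n+f_{n+1})\bmod 29\bigr)\pmod{232}.$$
So $\floor{\alpha f_n}\bmod m$ depends only on $n$ modulo $\operatorname{lcm}(\pi(29),2)$, where $\pi(29)$ is the Pisano period of $29$. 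Because $29\equiv -1\pmod 5$, $\pi(29)$ divides $2\cdot 30=60$; in fact one checks $\pi(29)=14$, which is even. Hence the pair (value of $8(23f_n+f_{n+1})\bmod 232$, parity of $n$) is periodic in $n$ with period $14$.

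I would then tabulate $n=5,\dots,18$. Each entry yields a multiple of $8$ modulo $232$, call it $8k$. The shift set for that $n$ is $\{8k-1,8k\}$ when $n$ is even and $\{8k,8k+1\}$ when $n$ is odd. The check is that every such value is one of the entries $7,8,9,55,56,57,127,128,129,159,160,161,183,184,185,191,192,193,199,200,201$ in $S$.

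The main obstacle is only bookkeeping: getting the Pisano period and the residues $23f_n+f_{n+1}\bmod 29$ right, and correctly aligning the parity offset $-1$ or $0$ with them. The structure of $S$, which consists of triples centred on the multiples $8,56,128,160,184,192,200$ of $8$ together with the extra values $126$, $162$, $196$, $197$ coming from small $n$, strongly suggests that this tabulation closes.
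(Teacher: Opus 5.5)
Your proposal is correct and is essentially the paper's argument: the floor-difference bound, Lemma \ref{LemmaRecur} with $8|\psi|^n<1$ for $n\ge 5$, and direct computation for $f\in\{1,2,3\}$. The paper skips your parity bookkeeping: it notes that $184f_n+8f_{n+1}$ has period $7$ modulo $232$ with orbit $\mathcal O=\{8,56,128,160,184,192,200\}$ and places all shifts in $\mathcal O\cup(\mathcal O+1)\cup(\mathcal O-1)$, which is exactly what your tabulation yields, since the period is odd and so each residue occurs with both parities of $n$. One harmless slip: for $29\equiv -1 \pmod 5$ the correct bound is $\pi(29)\mid 28$, not $\pi(29)\mid 60$ (indeed $14\nmid 60$), but you only use the directly checked value $\pi(29)=14$.
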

\begin{proof}
    For all real $x,y$, we have $\floor{x+y} - \floor{x} \in \{\floor{y},\lceil y\rceil\}$. Then the set of possible values of $\rho(x+f) - \rho(x)$ is a subset of $\{ \floor {\alpha f} \pmod {m},\lceil \alpha f\rceil \pmod{m}: f \in F\}$.

     From Lemma \ref{LemmaRecur}, we have $\alpha f_n = 184 f_n + 8f_{n+1} - 8\psi^n$. It is straightforward to check that the sequence $a_n = 184 f_n + 8f_{n+1}$ is periodic modulo $m$ with period 7. Its orbit $\{a_n: n \ge 0\}$ is the set $\mathcal O :=\{8,56,128,160,184,192,200\}$. 

    For $1 \le n \le 4$, a direct calculation (modulo $m$) gives $\floor{\alpha f_n} \in \{126,161,196\}$ and $\lceil \alpha f_n \rceil \in\{127,162,197\}$.
    For $n \ge 5$, we have $|8\psi^n| < 1$, so $|a_n - \alpha f_n| < 1$, and the result follows since $$S = \{126,161,196\} \cup \{127,162,197\} \cup \mathcal O \cup (\mathcal O +1) \cup (\mathcal O -1).$$
\end{proof}

Let $G$ be the directed Cayley graph with vertex set $\Z_m$ where there is a directed edge $(x,y)$ if and only if $y-x \in S$. By Lemma \ref{LemmaS}, we have that $\chi$ does not contain an arbitrarily long monochromatic diffsequence if for all colors $i = 1,2,3$, the induced subgraph $G[C_i]$ has no directed cycles. This can be verified easily via a standard topological sorting algorithm. The longest paths in $G[C_i]$ have 19, 18, and 14 vertices for $i = 1,2,3,$ respectively, so the longest possible length of a monochromatic diffsequence is at most 19. 

\section{Discussion}\label{SectionDiscussion}
 We conclude by offering some brief observations on the proof in Section \ref{SectionMain} to give some insight into where the choices of $m, \alpha$, and $C$ came from. These were all specially chosen for the Fibonacci numbers $F$, but the heuristics and methods described here are applicable to other sequences given by linear recurrences, for instance the Lucas, Pell, and Perrin numbers. We are optimistic that the bounds on the degree of accessibility for all these numbers can be improved from those given by the author in \cite{WJWFib4}.   

We begin with the modulus $m$. We want to find a sequence that is periodic and satisfies the Fibonacci recurrence modulo $m$. If $t$ is a desired period, we need a nontrivial solution to the following system of equations modulo $m$: 

$$x_{i} +x_{i+1} = x_{i+2} : i \in \Z_t. $$ Writing this system as $M_t x = 0$, we can compute that $\det M_7 = 29$. Therefore for any $m$ divisible by 29, there is a nontrivial period 7 sequence of elements of $\Z_m$ that satisfies the Fibonacci recurrence modulo $m$. One such sequence modulo 29 has the repeating block $B =(1, 24, 25, 20, 16, 7, 23)$. 

Set $a = 23, b =1$, $g_n = (a+b\phi)f_n$, and $r_n = af_{n}+bf_{n+1}$. We have $r_0 = 1, r_1 = 24$, and $r_n = r_{n-1} + r_{n-2}$. Therefore the sequence $r_n$ (modulo 29) is precisely the repeating block $B$. Lemma 2 gives that $|g_n -r_n|$ is small for large $n$, so the values of $g_n$ modulo 29 are approximately the values in $B$. In particular, $g_n$ takes on a small set of values modulo 29. However, this set is still too large relative to the modulus 29. We can correct this by dilating by a factor of 8, giving our values $\alpha = 8(23+\phi), m = 8(29)$. This dilation does not enlarge the set $S$ of values of $\floor{\alpha f_n}$ modulo 29 too much, but allows for more flexibility in finding a 3-coloring $C$ of the directed Cayley graph $G$. 

The problem of finding $C$ is related to finding the \emph{dichromatic number} of a directed graph, which is the fewest number of colors needed to color the vertices such that each color class is acyclic \cite{NEUMANNLARA_DichromaticNumber}. For our directed graph $G$, we need to determine whether $G$ has dichromatic number at most 3. Unfortunately, deciding whether a directed graph has dichromatic number at most 2 is already NP-complete \cite{CircularChromaticNumberBokal_et_al}. In practice, such a coloring can be found by integer programming or SAT solving (as was done here), but it is not obvious how to find colorings generally. The author plans to refine the techniques to find $m, \alpha,$ and $C$ in future work.     

Finally, we mention that while this work establishes the exact value $\operatorname{doa}(F) = 2$, there is still more to the story. The value 19 for the number of terms of the longest monochromatic diffsequence was not optimized, and the author suspects that there is a coloring that avoids shorter monochromatic diffsequences. However, we know from \cite{Fibonacci_Ramsey} that 5-term monochromatic $F$-diffsequences are unavoidable. It would be interesting to find the smallest number of terms in a  monochromatic diffsequence that can be avoided with a 3-coloring. 

\section*{Acknowledgments}
The author thanks Alexander Clifton for many helpful discussions on diffsequences. 

\section*{AI declaration}
The author acknowledges that he found the proof in Section \ref{SectionMain} with assistance from an advanced LLM. The mathematical commentary in Section \ref{SectionDiscussion} is the author's own. All results were checked and independently verified by the author. The entirety of this note was written by the author alone without any AI assistance.  
\bibliographystyle{abbrv}
\bibliography{Bibliography.bib}

\end{document}